\newtheorem{thm}{Theorem}
\newtheorem{cor}{Corollary}
\newtheorem{prop}{Proposition}
\theoremstyle{definition}
\theoremstyle{remark}
\newtheorem{defin}{Definition}
\begin{document}
\title{A characterization of Lelong classes on toric manifolds}

\author{Maritza M. Branker}
\address{Department of Mathematics, Niagara University, NY 14109}
\email{mbranker@niagara.edu}
\thanks{}
\author{Ma{\l}gorzata Stawiska}
\address{Department of Mathematics, University of Kansas, Lawrence, KS 66045}
\email{stawiska@ku.edu}
\thanks{Preliminary version}
\dedicatory{} \keywords{plurisubharmonic functions; Lelong class; toric varieties}

\maketitle

\section{Introduction}

There are many connections between pluripotential theory in $\mathbb{C}^n$ (both the classical theory and weighted analog) and the study of complex polynomials. For example,
the following theorem, due to Siciak (\cite{Si}; see also \cite{Kl}, Theorem 5.1.6),  provides a polynomial approximation of the Lelong class $\mathcal{L}$ of plurisubharmonic (psh) functions in $\mathbb{C}^n$ of logarithmic growth. It also shows a relation between $\mathcal{L}$ and the class $\mathcal{H}^n_+$ of nonnegative psh functions in $\mathbb{C}^n$ which are absolutely homogeneous of order one.  \\

\begin{thm}\label{thm:Lclass} Let $h: \mathbb{C}^n \mapsto [0,\infty), \; u: \mathbb{C}^n \mapsto [-\infty,\infty)$ be functions such that $h \not \equiv 0$ and $u \not \equiv -\infty$.\\
(i) If $h \in \mathcal{C}(\mathbb{C}^n) \cap \mathcal{H}^n_+$ and $h^{-1}(0)=\{0\}$, then $h(z)=\sup \{|Q(z)|^{1/(\mbox{deg }Q)}\}, z \in \mathbb{C}^n$, where the $\sup$ is taken over all complex homogeneous polynomials $Q$ such that $|Q|^{1/(\mbox{deg }Q)}\leq h$ in $\mathbb{C}^n$.\\
(ii) A sufficient and necessary condition for $h$ to belong to $\mathcal{H}^n_+$ is that 
\newline $h = (\limsup_{j \to \infty}|Q_j|^{1/j})^*$ for some sequence of complex homogeneous polynomials such that $\mbox{deg }Q_j \leq j$. In particular, if $h \in  \mathcal{H}^n_+$, then $\log h \in \mathcal{L}$.\\
(iii) The function $u$ belongs to $\mathcal{L}$ if and only if
$e^u=(\limsup_{j \to \infty}|P_j|^{1/j})^*$ for some sequence of
complex polynomials on $\mathbb{C}^n$ such that $\mbox{deg }P_j
\leq j$.
\end{thm}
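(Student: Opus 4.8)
The plan is to establish the three parts in the order (i), (ii), (iii), using (i) as the analytic core, deducing the ``only if'' half of (ii) from it by a diagonalization, and reducing (iii) to (ii) by homogenization. The recurring elementary remark is that for a homogeneous polynomial $Q$ of degree $m$ the function $|Q|^{1/m}=\exp(\frac1m\log|Q|)$ belongs to $\mathcal{H}^n_+$, and that, by homogeneity of order one, $|Q|^{1/m}\le h$ on $\mathbb{C}^n$ is equivalent to $\|Q\|_K\le 1$, where $K:=\{h\le 1\}$.

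For part (i) I would work with the set $K=\{h\le1\}$. Continuity of $h$, its homogeneity, and $h^{-1}(0)=\{0\}$ make $K$ a compact, balanced, $L$-regular set whose Minkowski gauge is exactly $h$. The inequality $\sup\{|Q|^{1/\deg Q}\}\le h$ is immediate from the equivalence above. For the reverse inequality I would invoke the Siciak--Zaharjuta theory: the extremal function $V_K$ of a compact $L$-regular set is continuous and equals $\log^+ h$, and it is the upper envelope of $\frac1{\deg P}\log|P|$ over all polynomials $P$ with $\|P\|_K\le1$. Because $K$ is balanced, Cauchy estimates on the discs $\lambda\mapsto P(\lambda z)$ show $\|P_k\|_K\le\|P\|_K$ for each homogeneous component $P_k$, so the envelope may be taken over homogeneous $Q$ alone. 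This yields $\sup\{\frac1{\deg Q}\log|Q|\}=\log h$ on $\{h\ge1\}$; to pass to all of $\mathbb{C}^n$ I would use that, for fixed $z$, the point $w=\lambda z$ with $|\lambda|$ large lies in $\{h\ge1\}$, and that both $\frac1{\deg Q}\log|Q|$ and $\log h$ increase by exactly $\log|\lambda|$ under $z\mapsto\lambda z$, so an approximant at $w$ is automatically one at $z$. Exponentiating gives (i).

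For (ii), the ``if'' direction is soft: each $\frac1j\log|Q_j|$ is psh, so the regularized $\limsup$ is psh wherever it is locally bounded above, whence $h=\exp((\limsup\frac1j\log|Q_j|)^*)$ is psh and nonnegative; homogeneity of order one is checked from the scaling of the $\limsup$, the terms with $\deg Q_j/j$ bounded away from $1$ being asymptotically negligible. For the ``only if'' direction I would first treat continuous $h$ with $h^{-1}(0)=\{0\}$, where (i) gives a countable family of homogeneous $R_k$ (degree $m_k$) with $\sup_k|R_k|^{1/m_k}=h$ and $|R_k|^{1/m_k}\le h$; setting $Q_j:=R_k^{N}$ whenever $j=Nm_k$ (and an arbitrary admissible choice otherwise) makes $|Q_j|^{1/j}=|R_k|^{1/m_k}$ occur for infinitely many $j$, so that $\limsup_j|Q_j|^{1/j}=\sup_k|R_k|^{1/m_k}=h$. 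A general $h\in\mathcal{H}^n_+$ is reached by a homogeneity-preserving regularization and a monotone approximation argument, after which $\log h=(\limsup\frac1j\log|Q_j|)^*$ is psh, giving the concluding assertion $\log h\in\mathcal{L}$. Part (iii) then follows by homogenization: to $u\in\mathcal{L}$ associate $H(w_0,w'):=|w_0|\exp u(w'/w_0)$ on $\mathbb{C}^{n+1}$, which is log-homogeneous of order one and, thanks to the logarithmic growth defining $\mathcal{L}$, extends psh across $\{w_0=0\}$ to an element of $\mathcal{H}^{n+1}_+$; homogeneous polynomials $Q$ of degree $j$ on $\mathbb{C}^{n+1}$ correspond to polynomials $P=Q(1,\cdot)$ of degree $\le j$ on $\mathbb{C}^n$, and restricting the representation of (ii) for $H$ to the slice $\{w_0=1\}$ produces the $P_j$ with $e^u=(\limsup_j|P_j|^{1/j})^*$; the converse direction again uses that each $\frac1j\log|P_j|$ lies in $\mathcal{L}$ up to an $o(1)$ error.

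The hard part is the analytic core behind (i), namely producing, for each target point and each tolerance, a homogeneous polynomial with two-sided control: small enough on $K$ (degree roughly $m$, value dominated by $h+o(1)$) yet nearly maximal at the chosen point. I would either cite Siciak--Zaharjuta, as above, or reconstruct it via Hörmander's $L^2$ estimate, solving $\bar\partial$ with the psh weight $2m\log h$ twisted by a logarithmic pole at the peak point and a term $\sim m\log(1+|z|^2)$ that both caps the degree and forces the solution to be polynomial. The two remaining technical obstacles I expect are: removing the continuity hypothesis in passing from (i) to general $h\in\mathcal{H}^n_+$ while controlling the upper-semicontinuous regularization $(\cdot)^*$ on the exceptional pluripolar set; and verifying that the homogenization $H$ is genuinely plurisubharmonic across the hyperplane at infinity, where the growth condition of $\mathcal{L}$ must be converted into a removable-singularity statement.
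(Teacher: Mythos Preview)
The paper does not give its own proof of this theorem; it is quoted as a known result of Siciak, with references to \cite{Si} and to \cite{Kl}, Theorem~5.1.6, and serves only as motivation for the paper's toric analogs, Theorems~\ref{thm:hom-psh} and~\ref{thm: limsup}. So strictly speaking there is nothing to compare your proposal against.

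That said, your approach differs noticeably from the arguments the paper uses for those analogs, and since the toric case with $X=\mathbb{CP}^n$, $L=\mathcal{O}(1)$, $A_\varphi=\mathbb{C}^{n+1}$ specializes exactly to Siciak's theorem, the comparison is informative. For part~(i) you invoke the Siciak--Zaharjuta extremal function $V_K$ (or, alternatively, H\"ormander $L^2$ estimates); the paper instead argues directly from polynomial convexity. The sublevel set $\Omega=\{H<1\}$ is a bounded pseudoconvex, hence polynomially convex, balanced neighborhood of the origin; for $a$ with $H(a)=1$ and any $\lambda<1$, the point $\mu a$ (some $\mu\in(\lambda,1)$) lies outside the polynomial hull of $K_\lambda=\{H\le\lambda\}$, so one finds a homogeneous polynomial $Q$ with $|Q(\mu a)|\ge1$ and $\|Q\|_{K_\lambda}\le1$; homogeneity gives $|\lambda^{\deg Q}Q|^{1/\deg Q}\le H$ and hence $\lambda\le\mu\, g(a)$, and letting $\lambda\to1$ yields $g(a)\ge1$. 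For part~(ii) you diagonalize the supremum from~(i); the paper instead picks a holomorphic function $F$ on $\Omega$ that does not extend across $\partial\Omega$, uses the homogeneous components $Q_j$ of its Taylor expansion, and observes via the Cauchy root test and non-extendability that $\{(\limsup|Q_j|^{1/j})^*<1\}=\Omega$, whence homogeneity forces $(\limsup|Q_j|^{1/j})^*=H$. Your route is heavier but more robust (it does not lean on balancedness at every step and connects to $L^2$ methods); the paper's route is short and entirely elementary, but depends essentially on the balanced, pseudoconvex geometry of $\{H<1\}$.
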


Looking for possible generalizations of this theorem, one should take into account the following (cf. \cite{BS}, Proposition 5 and Theorem 1, and the references given there): considering $\mathbb{C}^n$ as the complement of the hyperplane at infinity in $\mathbb{CP}^n$, the class $\mathcal{L}(\mathbb{C}^n)$ corresponds in the 1-to-1 manner with the class $PSH(\mathbb{CP}^n,\omega_{FS})$ of functions quasi-plurisubharmonic with respect to the Fubini-Study form $\omega_{FS}$. There is a also a one to one correspondence the class of positive singular metrics on the hyperplane bundle $L$ over $\mathbb{CP}^n$. By Grauert's characterization of positive line bundles, all of these correspond with the class of plurisubharmonic functions in the total space of the universal bundle $L'$ over $\mathbb{CP}^n$ (dual to the hyperplane bundle) which are nonnegative, not identically zero and absolutely homogeneous of order one in each fiber. 
Furthermore, homogeneous polynomials of degree $d \geq 1$ in $L'$ can be thought of as sections of the $d$-th tensor power $dL$ (we use the additive notation for the tensor product operation).   The notion of homogeneous polynomials on  the total space of $L'$ and the correspondence between them and  holomorphic sections of $L$ is  also  valid for certain line bundles over  toric manifolds (see below for precise statements).  We will work in this setting to prove an analog of Theorem \ref{thm:Lclass} for  a toric variety $X$ with a line bundle $\mathcal{O}_X(D)$ coming from a Cartier divisor $D$ with a strictly convex support function (see below for relevant definitions and properties). As an application of this theorem we will prove an approximation theorem for Chern classes of such line bundles.

\section{Line bundles over toric varieties}

Let's begin by recalling some facts about toric varieties and line bundles. A toric variety $X_\Sigma$ can be constructed from a fan $\Sigma$ in $\mathbb{R}^n$,that is a certain collection of cones generated by vectors from $\mathbb{Z}^n$.
Let $v_1,...,v_N$ denote the primitive generators of one-dimensional cones in $\Sigma$. A character on $\mathbb{Z}^N$ (i.e., an integer-valued function
$\varphi: \mathbb{Z}^N \mapsto \mathbb{Z}_+$) is determined by its values $a_i=\varphi(e_i)$, where $e_i, \; i=1,...,N$ are the vectors of the standard basis, and it defines a function $\varphi: \mathbb{R}^n \rightarrow \mathbb{R}$  by putting $\varphi(v_i)=-a_i$ and  extending it as a linear function on each cone $\sigma \in \Sigma$. We will follow the convention from \cite{CLS}; according to their definition 6.1.17, a function $\varphi:S \mapsto \mathbb{R}$ defined on a convex set $S \subset \mathbb{R}^n$ is convex if and only if $\varphi(tu+(1-t)tv) \geq t\varphi(u)+(1-t)\varphi(v)$ for all $u,v \in S$ and $t \in [0,1]$.
The support function $\varphi_D$ of a Cartier divisor on $X_\Sigma$ ( with the Cartier data $\{m_\sigma: \sigma \in \Sigma(n)\}$) is called strictly convex if it is convex and for every cone $\sigma \in \Sigma(n)$ it satisfies $<m_\sigma,u>=\varphi_D(u) \Leftrightarrow u \in \sigma$.\\
The function $\varphi=\varphi_D$ determines a torus invariant divisor $D$ on $X_\Sigma$, $D=\sum_{i=1}^N a_iD_i$, where $D_i$ are hypersurfaces corresponding to the one-dimensional cones in $\Sigma$. By Propositions 4.3.3 and 4.3.8 in \cite{CLS} (see also Proposition 2 in \cite{Ro}), the line bundle associated with $D$ has global sections $\Gamma(X_\Sigma,\mathcal{O}_{X_\Sigma}(D))=\bigoplus_{m \in P_D \cap \mathbb{Z}^n}\mathbb{C}\cdot \chi^m$, where $P_D \subset \mathbb{R}^n$ is the polyhedron defined by $P_D=\{m \in \mathbb{R}^n: <m,v_i> \; \geq -a_i, \quad i=1,...,N\}$. By Lemma 6.1.9 in \cite{CLS}, $P_D=\{m\in \mathbb{R}^n: \varphi(x) \leq \; <m,v> \forall v \in |\Sigma|\}$. By Theorem 6.1.10 in \cite{CLS}, $\varphi_D$ is convex if and only if $D$ is basepoint-free (i.e., $\mathcal{O}_{X_\Sigma}(D)$ is generated by global sections).  This in turn is equivalent to the property that $\varphi_D(u)=\min_{m \in P_D}<m,u> \quad \forall u \in \mathbb{R}^n$. Finally by Theorem 6.1.15 in \cite{CLS}, $D$ is ample if and only if $\varphi_D$ is strictly convex.\\

 From now on we assume that $X$ is a compact toric variety with a line bundle $L= \mathcal{O}_X(D)$ coming from a Cartier divisor $D$ with a strictly convex support function (hence $L$ is ample). Recall Grauert's ampleness criterion for line bundles (\cite{MM}, Theorem B.3.13 and the references there) which says that a holomorphic line bundle $L$ over a compact complex manifold $X$ is ample if and only if the zero section $Z(L')$ of the dual bundle $L'$ has a strongly pseudoconvex neighborhood. 	It follows that $Z(L')$ can be blown down to a finite set of points.\\

Consider the following cone in $\mathbb{R}^{n+1}$: $C_\varphi=\{(x,\alpha)\in \mathbb{R}^n\times \mathbb{R}: \alpha \geq \varphi(x)\}$. This is equal to $\mbox{Cone}\{(0,1),(v_i,-a_i):i=1,...,N\}$. By \cite{Ro}, Proposition 1 (cf. also Exercise 6.1.13 in \cite{CLS}), the function 
 $\varphi$ is strictly convex if and only if $C_\varphi$ is a convex cone in $\mathbb{R}^{n+1}$ with all its proper faces simplicial and with $(v_i,-a_i): i=1,...,N$ as the fundamental generators. \\

For $\sigma \in \Sigma$ let $\tilde{\sigma}$ be $\{(u,\alpha): u\in \sigma, \alpha \geq \varphi(u)\}= \mbox{Cone }\{(0,1),(v_i,-a_i):v_i \in \sigma(1)\}, \quad i=1,...,N$.
By proposition 7.3.1 in \cite{CLS}, the affine variety $U_{\tilde{\sigma}}$ is isomorphic to $U_\sigma \times \mathbb{C}$, which is a trivializing neighborhood for the line bundle $\mathcal{O}_{X_\Sigma}(D)$. The projection induces a map $U_{\tilde{\sigma}} \mapsto \mathbb{C}$, which is $\chi^{(-m_\sigma,1)}$.
This suggests another way of looking at the sections of $\mathcal{O}_{X_\Sigma}(D)$. Specifically, the following characterization appears in \cite{Ro} (discussion between Lemma 5 and Proposition 3):\\
Let $A_\varphi$ be the affine toric variety (with the action of the torus $T^{n+1}$) associated with
the cone $C_\varphi$: $A_\varphi = \mbox{Spec }\mathbb{C}[\mathbb{Z}^{n+1} \cap C_\varphi^*]$. This is the affine variety obtained by blowing down the zero section of the line bundle $\mathcal{O}_{X_\Sigma}(-D)$. The affine coordinates on $A_\varphi$ are given by the characters $\chi^m$ such that $m \in P_D \cap \mathbb{Z}^n$. One has thus the identification  $\Gamma(X_\Sigma,\mathcal{O}_{X_\Sigma}(D))=\{A_\varphi \mapsto
\mathbb{C}: f(\lambda \cdot z)=\lambda f(z), \lambda \in \mathbb{C}^*, z \in A_\varphi\}$, so the sections of $L$ can be thought of as linear forms on $A_\varphi$. This kind of identification can be extended to $\Gamma(X,dL)$. In fact, the following holds:

\begin{prop} (\cite{Ba}; \cite{Co}, Section 5; cf. also \cite{CLS}, Appendix A to the Chapter 7): Let $X$ and $L$ be as above. Consider the graded ring
$R_D = \bigoplus_{d \in \mathbb{N}}\Gamma(X,dL)$. Then there exists a ring isomorphism between $R_D$ and $\mathbb{C}[\mathbb{Z}^{n+1} \cap C_\varphi^*]$, preserving the grading.
\end{prop}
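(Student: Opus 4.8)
The plan is to exhibit an explicit grading-preserving ring isomorphism between $R_D = \bigoplus_{d \in \mathbb{N}}\Gamma(X,dL)$ and the semigroup ring $\mathbb{C}[\mathbb{Z}^{n+1} \cap C_\varphi^*]$, building on the degree-one identification already recalled in the excerpt. First I would fix the correct bookkeeping of the grading. The dual cone $C_\varphi^*$ lives in $(\mathbb{R}^{n+1})^*$, and the last coordinate functional (dual to the $(0,1)$ ray of $C_\varphi$) supplies the grading: for a lattice point $(m,d) \in \mathbb{Z}^{n+1} \cap C_\varphi^*$ the integer $d$ records the degree. I would verify that $C_\varphi^* = \{(m,d) : \langle m,v_i\rangle - d a_i \geq 0,\ i=1,\dots,N,\ d \geq 0\}$, so that the slice at fixed $d$ is exactly $\{m : \langle m,v_i\rangle \geq -d a_i\} = P_{dD} \cap \mathbb{Z}^n$, the lattice points of the polyhedron attached to the divisor $dD$. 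This identifies the degree-$d$ graded piece of the semigroup ring with $\bigoplus_{m \in P_{dD}\cap\mathbb{Z}^n}\mathbb{C}\cdot\chi^{(m,d)}$.

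**Constructing the map.** Next I would define the map on the level of the monomial bases. For each $d$, the formula from \cite{CLS} (Propositions 4.3.3 and 4.3.8) gives $\Gamma(X,dL)=\bigoplus_{m \in P_{dD}\cap\mathbb{Z}^n}\mathbb{C}\cdot\chi^m$, so I would send the section $\chi^m \in \Gamma(X,dL)$ to the monomial $\chi^{(m,d)} \in \mathbb{C}[\mathbb{Z}^{n+1}\cap C_\varphi^*]$ and extend $\mathbb{C}$-linearly; by the slice computation above this is a degree-preserving linear isomorphism in each degree, hence a graded linear isomorphism of the two spaces. The substance is then to check that this linear bijection respects multiplication. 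Concretely, if $s \in \Gamma(X,dL)$ and $t \in \Gamma(X,d'L)$ are monomial sections $\chi^m,\chi^{m'}$, their product as sections of $(d+d')L$ corresponds to $\chi^{m+m'}$, and this must match $\chi^{(m,d)}\cdot\chi^{(m',d')}=\chi^{(m+m',d+d')}$ in the semigroup ring. This reduces to the purely additive statement that the assignment $(m,d)\mapsto m$ on lattice points is compatible with semigroup addition, which is immediate.

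**The main obstacle.** The genuinely nontrivial point — and the one I expect to absorb most of the work — is that the multiplication of sections in $R_D$ really is governed by addition of the exponents $m$, i.e. that the natural multiplication map $\Gamma(X,dL)\otimes\Gamma(X,d'L)\to\Gamma(X,(d+d')L)$ carries $\chi^m\otimes\chi^{m'}$ to $\chi^{m+m'}$, together with the closure condition $m+m' \in P_{(d+d')D}$. I would establish this by working in a fixed trivializing chart $U_{\tilde\sigma}\cong U_\sigma\times\mathbb{C}$ from Proposition 7.3.1 of \cite{CLS}, where the excerpt records that the fiber coordinate is $\chi^{(-m_\sigma,1)}$; in such a chart the sections become honest functions and their product is literal multiplication of characters, giving $\chi^m\cdot\chi^{m'}=\chi^{m+m'}$ directly, while the Minkowski-sum inclusion $P_{dD}+P_{d'D}\subseteq P_{(d+d')D}$ guarantees the product lands in the correct graded piece. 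Finally I would note that surjectivity and injectivity are already secured degree-by-degree by the slice identification, so assembling the graded pieces yields the desired ring isomorphism. An alternative, more structural route would be to invoke the Cox-ring / total-coordinate-ring description of \cite{Co} and \cite{CLS}, Appendix A to Chapter 7, where $R_D$ is realized intrinsically as the section ring of the affine cone $A_\varphi=\mathrm{Spec}\,\mathbb{C}[\mathbb{Z}^{n+1}\cap C_\varphi^*]$ over $X$, making the isomorphism a formal consequence of $A_\varphi$ being the relative spectrum of $R_D$; I would present the explicit monomial computation as the primary argument and cite these sources for the structural viewpoint.
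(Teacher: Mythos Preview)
Your proposal is correct and follows the standard explicit-monomial argument one finds in the cited sources. However, you should note that the paper itself does \emph{not} supply a proof of this proposition: it is stated as a quoted result, with attribution to \cite{Ba}, \cite{Co} (Section~5), and \cite{CLS} (Appendix~A to Chapter~7), and no argument is given in the text beyond those citations. So there is no ``paper's own proof'' to compare against; your write-up is essentially a reconstruction of the argument that the references contain. The computation you outline---identifying the degree-$d$ slice of $C_\varphi^*$ with $P_{dD}\cap\mathbb{Z}^n$ via the defining inequalities of the dual cone, matching monomial bases degree by degree, and checking that multiplication of sections corresponds to addition of exponents---is exactly the approach behind the references, and the alternative structural route via $A_\varphi=\mathrm{Spec}\,R_D$ that you mention at the end is closer in spirit to how \cite{CLS} packages it.
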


The grading  in this proposition is given by $\mbox{deg }t_0^d\chi^m(t)=d, d \geq 1$. A closely related result, proved using the above isomorphism, is the following: if one considers $R_D$ as a $\mathbb{C}$-algebra with the operation of multiplication of
complex functions, then $R_D$ is finitely generated \cite{El}.We can now formulate our first result:

\begin{thm}
\label{thm:hom-psh} Let  $X$ be a toric manifold and $L$ be a line bundle over $X$ defined by a Cartier divisor $D$ with a strictly convex support function.  Let $H: L' \mapsto [0,\infty), \quad H \not \equiv 0$ be a  continuous plurisubharmonic function which is homogeneous in each fiber, with $H^{-1}(0)=Z_{L'}$, where $Z_{L'}$ is the zero section of $L'$. Then $H(z)=\sup \{|Q(z)|^{1/(\mbox{deg }Q)}\}$, where $Q$ is a combination of the variables $t_0^d\chi^m(t)$ (a homogeneous polynomial in coordinates of $A_\varphi$),  $\mbox{deg }Q = d, \quad m \in dP_D$, such that $|Q|^{1/(\mbox{deg }Q)}\leq H$.\\
\end{thm}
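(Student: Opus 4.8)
The plan is to follow the strategy of Klimek's proof of Theorem \ref{thm:Lclass}(i), transported to the cone $A_\varphi$ by means of the graded-ring isomorphism of the Proposition. Write $g(z)=\sup\{|Q(z)|^{1/\deg Q}\}$ for the right-hand side. Since $H$ is absolutely homogeneous of order one in each fiber and every admissible $Q$ of degree $d$ is homogeneous of degree $d$, both $g$ and $H$ are homogeneous of order one, so it suffices to compare them on the ``unit sphere'' $\{H=1\}$, equivalently to study the balanced domain $\Omega=\{z\in L':H(z)<1\}$. The inequality $g\le H$ is immediate from the definition; the content is the reverse inequality $g\ge H$.

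First I would set up the dictionary. Because $H$ vanishes exactly on $Z_{L'}$ and is homogeneous, $\log H$ defines a singular metric of nonnegative curvature on $L$ (equivalently, $H$ is the length function of a positively curved metric on $L'$), and the condition $|Q|^{1/\deg Q}\le H$ on $L'$ is, by homogeneity, equivalent to the sup-norm bound $\sup_{\overline\Omega}|Q|\le 1$; under the identification $Q\leftrightarrow s\in\Gamma(X,dL)$ from the Proposition this reads $\sup_X|s|\,e^{-d\log H}\le 1$. Thus the admissible homogeneous polynomials of degree $d$ are exactly the degree-$d$ sections bounded by $1$ for the $d$-th tensor power of this metric, and they are automatically spanned by the monomials $t_0^d\chi^m$ with $m\in dP_D$, which is the normalization in the statement. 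By Grauert's criterion recalled above, $\Omega$ is a strongly pseudoconvex neighborhood of $Z_{L'}$ that blows down to the apex of $A_\varphi$; strong pseudoconvexity will supply the regularity needed for the extremal function below.

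The heart of the argument is the construction of peak sections. Fixing $z_0$ with $H(z_0)=1$, lying over $x_0\in X$, I would produce, for every $\varepsilon>0$ and all large $d$, a section $s_d\in\Gamma(X,dL)$ with $\sup_X|s_d|\,e^{-d\log H}\le 1$ and $|s_d(x_0)|^{1/d}\ge H(z_0)-\varepsilon$. There are two standard routes. The potential-theoretic one identifies the homogeneous Siciak extremal function of $\overline\Omega$ with $\log^+H$ (here $\log H$ has logarithmic growth in the affine coordinates of $A_\varphi$, placing it in the relevant Lelong class) and then invokes Siciak's polynomial-approximation theorem; the analytic one applies H\"ormander's $L^2$-estimate (or Ohsawa--Takegoshi) on the strongly pseudoconvex $\Omega$ with weight $2d\log H$ to solve a $\bar\partial$-problem producing a section peaking at $x_0$, after which an $L^2$-to-sup estimate gives the pointwise lower bound. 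Either way one first obtains an almost-extremal element that need not be homogeneous; here the circled symmetry $z\mapsto e^{i\theta}z$ of $\Omega$, a consequence of absolute homogeneity, is decisive: averaging over the $S^1\subset\mathbb{C}^*$ action shows that each homogeneous component of an admissible function is again admissible, so one may project onto a single graded piece $\Gamma(X,dL)$ without increasing the sup-norm, and a degree count isolates the component carrying the growth at $z_0$. Letting $d\to\infty$ and $\varepsilon\to 0$ yields $g(z_0)\ge H(z_0)$, hence $g\equiv H$.

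I expect the main obstacle to be precisely this passage to a genuinely homogeneous polynomial of controlled degree. The $L^2$ (or extremal-function) machinery naturally produces objects that are only asymptotically homogeneous, and one must check that the homogeneous component of the correct degree $d$ still realizes the value $H(z_0)$ up to a factor tending to $1$; the circled-symmetry averaging handles the norm bound, but the bookkeeping that keeps the peak value concentrated in one degree requires care. A secondary technical point is that $A_\varphi$ may be singular at its apex even when $X$ is smooth, so the $L^2$-estimates and the regularity of the extremal function should be carried out on the smooth locus $\Omega\setminus Z_{L'}$, or directly on $X$ with the metric $e^{-\log H}$, using strong pseudoconvexity to rule out the upper-semicontinuous regularization and thereby obtain the supremum exactly, as asserted.
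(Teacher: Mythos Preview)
Your overall framing matches the paper's: reduce to showing $g(a)\ge 1$ at a point with $H(a)=1$, work with the balanced sublevel domain $\Omega=\{H<1\}$, blow down $Z_{L'}$ to the apex of $A_\varphi$, and use the circled symmetry to pass to homogeneous polynomials. Where you diverge is in the mechanism for producing the peak polynomial. The paper does \emph{not} invoke the Siciak extremal function or H\"ormander $L^2$-estimates. Instead it uses the single elementary observation that the blowdown $\tilde\Omega\subset A_\varphi$ is a bounded holomorphically convex neighborhood of a point in an \emph{affine} variety, hence polynomially convex. Then for each $\lambda\in(0,1)$ the compact $K_\lambda=H^{-1}([0,\lambda])$ has $\widehat{K_\lambda}\subset\tilde\Omega$, so for any $\mu\in(\lambda,1)$ the point $\mu a$ can be separated from $K_\lambda$ by a polynomial $Q$ on $A_\varphi$, which one may take homogeneous by the balanced-domain averaging you describe. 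After rescaling so that $\|Q\|_{K_\lambda}\le 1$ and $|Q(\mu a)|\ge 1$, homogeneity gives $|\lambda^{\deg Q}Q|^{1/\deg Q}\le H$, whence $\lambda\le g(\mu a)=\mu\, g(a)$; letting $\lambda\to 1$ forces $\mu\to 1$ and $g(a)\ge 1$.

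What each approach buys: the paper's polynomial-convexity argument sidesteps all analytic machinery, avoids regularity issues at the singular apex of $A_\varphi$, and dissolves the degree bookkeeping you flag as the main obstacle --- one only needs a single separating $Q$ and the crude estimate $\lambda\le\mu\, g(a)$, not an asymptotic statement as $d\to\infty$. Your $L^2$ route would also succeed and is more portable (it does not rely on $A_\varphi$ being affine), but it is substantially heavier than what the situation requires, and your anticipated difficulty of isolating one graded piece with the correct peak value is a genuine extra step that the paper's argument simply does not face.
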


\begin{proof} Consider $S=\bigcup_{z \in Z_{L'}}\{(z,t): t \in L'_z: |t|=1\}$. Then $M=\inf_S H$ is positive. By homogeneity in each fiber, $H(z,t) \leq M|t|$. Let $a \in L'$ be such that $H(a)=1$ and let $g(a)$ denote the supremum on the right-hand side evaluated at $a$. It is enough to show that $g(a) \geq 1$. 
The set $\Omega=\{z: H(z) < 1\}$ is a bounded pseudoconvex neighborhood  of $Z_{L'}$. 
After blowing down the zero section, we get a holomorphically convex bounded neighborhood $\tilde{\Omega}$ of a point in $A_\varphi$, hence a polynomially convex set.  Hence for any $\lambda \in (0,1)$ we have $\hat{K_\lambda} \subset \tilde{\Omega}$, where $K_\lambda=H^{-1}([0,\lambda])$. Now recall that the polynomial ring $\mathbb{C}[\mathbb{Z}^{n+1} \cap C_\varphi^*]$ is the subring of $\mathbb{C}[t_0,t_1^{\pm 1},...,t_n^{\pm 1}]$ spanned by Laurent monomials
$t_0^d\chi^m(t)$ with $m \in dP_D$ and the sections of $dL$ correspond exactly to homogeneous polynomials which are sums of $t_0^d\chi^m(t)$.  
Recall also that the fiber $L^j_z$ can be identified with the space of $j$-linear functions $L'_z \times ...\times L'_z \mapsto \mathbb{C}$. In particular, every section in $L^j$ is a homogeneous function of degree $j$ on every fiber of $L'$.\\

Fix a $\lambda \in (0,1)$. There exists a $\mu \in (\lambda,1)$ and a homogeneous polynomial $Q$ (viewed as a homogeneous function of $A_\varphi$) such that $Q(\mu a) \geq 1$ and $\|Q\|_{K_\lambda} \leq 1$. By homogeneity, $\lambda |Q|^{1/\mbox{deg }Q}$, so $\lambda \leq |\lambda^{\mbox{deg }Q}Q(\mu a)|^{1/\mbox{deg }Q} \leq g(\mu a) =\mu g(a)$. Letting $\lambda \to 1$ implies that $\mu \to 1$ and $1 \leq g(a)$. \end{proof}

We also have the following:
\begin{thm}\label{thm: limsup} Under the assumptions of Theorem \ref{thm:hom-psh}, $H = (\limsup_{j \to \infty}|Q_j|^{1/j})^*$ for some sequence of $Q_j$ of homogeneous polynomials on $A_\varphi$  with  $\mbox{deg }Q_j \leq j$.
\end{thm}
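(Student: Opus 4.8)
The plan is to bootstrap Theorem \ref{thm: limsup} from the already-established Theorem \ref{thm:hom-psh}. By Theorem \ref{thm:hom-psh} we have $H(z)=\sup\{|Q(z)|^{1/\deg Q}\}$ where the supremum runs over all homogeneous polynomials $Q$ on $A_\varphi$ with $|Q|^{1/\deg Q}\leq H$. The function appearing on the right of Theorem \ref{thm: limsup} is built from a single \emph{sequence} indexed by $j$, so the first task is to extract, from the (possibly uncountable) family of admissible $Q$, a countable sequence whose normalized moduli still recover $H$ in the limsup sense. Since $H$ is continuous and $A_\varphi$ (minus the blown-down point, or on the relevant punctured cone) is separable, I would first observe that the supremum is in fact attained as a countable supremum: choose a countable dense subset of points and, for each such point $p$ and each rational $\varepsilon>0$, pick an admissible $Q$ with $|Q(p)|^{1/\deg Q}>H(p)-\varepsilon$. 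Enumerate this countable collection as $\{Q_k\}_{k\in\mathbb{N}}$; by continuity of $H$ and of each $|Q_k|^{1/\deg Q_k}$, one gets $\sup_k |Q_k|^{1/\deg Q_k}=H$ pointwise.

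The second step is to convert the countable supremum into a $\limsup$ of a sequence indexed by the degree bound $j$. For each $k$ let $d_k=\deg Q_k$. The idea is to define $Q_j$ for each $j\in\mathbb{N}$ by selecting, among the finitely many $Q_k$ with $d_k\leq j$, a suitable power or product so that $\deg Q_j\leq j$ while $|Q_j|^{1/j}$ approximates $\max_{d_k\leq j}|Q_k|^{1/d_k}$. Concretely, for $Q_k$ of degree $d_k$ and any $j\geq d_k$, the homogeneous polynomial $Q_k^{\,\lfloor j/d_k\rfloor}$ has degree at most $j$, and by homogeneity $|Q_k^{\,\lfloor j/d_k\rfloor}|^{1/j}=|Q_k|^{\lfloor j/d_k\rfloor\, d_k/j\cdot(1/d_k)}\to|Q_k|^{1/d_k}$ as $j\to\infty$. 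Interleaving these powers across the indices $k$ (e.g. along a diagonal enumeration so that each $Q_k$ is revisited infinitely often) produces a sequence $Q_j$ with $\deg Q_j\leq j$ for which $\limsup_{j\to\infty}|Q_j|^{1/j}\geq |Q_k|^{1/d_k}$ for every $k$, hence $\geq\sup_k|Q_k|^{1/d_k}=H$.

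The third step is the reverse inequality $\limsup_j|Q_j|^{1/j}\leq H$. Because every $Q_j$ entering the construction is (a power of) an admissible polynomial, the defining property $|Q_k|^{1/d_k}\leq H$ forces $|Q_j|^{1/j}\leq H$ for all $j$ built this way, so the pointwise $\limsup$ is already bounded by $H$. Combining with the lower bound gives $\limsup_j|Q_j|^{1/j}=H$ pointwise on the locus where $H>0$; taking the upper semicontinuous regularization $(\cdot)^*$ then yields equality everywhere, since $H$ is continuous (hence equal to its own regularization) and the regularization can only increase the limsup on the zero set $Z_{L'}$, where $H=0$ is already the minimum.

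I expect the main obstacle to be the degree-bookkeeping in the second step: one must ensure simultaneously that $\deg Q_j\leq j$ and that \emph{every} admissible $Q_k$ contributes to the $\limsup$ arbitrarily often, which requires a careful diagonal schedule rather than a naive listing. A secondary technical point is justifying that the raised powers $Q_k^{\lfloor j/d_k\rfloor}$ remain genuine homogeneous polynomials on $A_\varphi$ of the allowed form $t_0^d\chi^m$ with $m\in dP_D$; this follows from the multiplicative (graded-ring) structure of $R_D\cong\mathbb{C}[\mathbb{Z}^{n+1}\cap C_\varphi^*]$ recorded in the Proposition, since products of such monomials again lie in the cone $C_\varphi^*$ with the degrees adding, so the power of an admissible section of $d_kL$ is an admissible section of $(\lfloor j/d_k\rfloor d_k)L$.
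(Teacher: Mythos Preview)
Your route is genuinely different from the paper's. The paper does not bootstrap from Theorem~\ref{thm:hom-psh}; instead it argues directly from pseudoconvexity. It observes that $\tilde{\Omega}=\{H<1\}$ (after blowing down $Z_{L'}$) is a balanced holomorphically convex neighbourhood of $0$ in $A_\varphi$, picks a holomorphic function $\tilde F$ on $\tilde{\Omega}$ that does not extend past $\partial\tilde{\Omega}$, expands it into homogeneous components $\tilde F=\sum_j Q_j$, and invokes the Cauchy root test: convergence forces $\limsup_j|Q_j|^{1/j}\le 1$ on $\tilde{\Omega}$, while non-extendability forces $\{\,(\limsup_j|Q_j|^{1/j})^*<1\,\}=\tilde{\Omega}$; homogeneity of both sides then upgrades equality of sublevel sets to $H\equiv(\limsup_j|Q_j|^{1/j})^*$. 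The paper's argument is shorter and avoids any diagonal bookkeeping; yours is more constructive and makes the dependence on Theorem~\ref{thm:hom-psh} explicit, which is pleasant, but it carries a cost you have underestimated.

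The gap is in your third step. You assert that ``the defining property $|Q_k|^{1/d_k}\le H$ forces $|Q_j|^{1/j}\le H$ for all $j$ built this way.'' This is false as stated. With $Q_j=Q_k^{m}$, $m=\lfloor j/d_k\rfloor$, the admissibility of $Q_k$ gives only $|Q_j|^{1/(m d_k)}=|Q_k|^{1/d_k}\le H$; since in general $m d_k<j$, at any point with $H(z)<1$ one has $|Q_j(z)|^{1/j}\ge |Q_j(z)|^{1/(m d_k)}$, and the desired bound need not hold (e.g.\ $d_k=3$, $j=5$, $H(z)=1/2$ allows $|Q_j(z)|^{1/j}$ up to $2^{-3/5}>1/2$). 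What you actually get is $|Q_j(z)|^{1/j}\le H(z)^{m d_k/j}$ with $m d_k/j\ge 1-d_{k(j)}/j$, so the \emph{limsup} bound $\limsup_j|Q_j|^{1/j}\le H$ is recoverable provided your diagonal schedule guarantees $d_{k(j)}/j\to 0$. This is easy to arrange (for instance, only allow $k$ at stage $j$ when $d_k\le \sqrt{j}$; each $k$ is still revisited infinitely often), but it must be said explicitly. With that correction your argument goes through; the upper regularization step at the end is fine since $H$ is continuous.
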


\begin{proof} 

 Let $H: L' \mapsto [0,\infty), \; H \not \equiv 0$ be a  continuous plurisubharmonic function which is homogeneous in each fiber, with $H^{-1}(0)=Z_{L'}$, where $Z_{L'}$ is the zero section of $L'$. The set $\Omega=\{z: H(z) < 1\}$ is a balanced pseudoconvex neighbourhood of $Z_{L'}$. We let $\Phi$ denote the map which blows down the zero section of $L'$ and $\tilde{\Omega} $ denote the set $\Omega$ maps to.

Since $\tilde{\Omega}$ is a holomorphically convex, there exists  a holomorphic function  $\tilde{F}$  on $\tilde{\Omega}$ which does not extend  beyond $\tilde{\Omega}$. Observe that $\tilde{\Omega}$ is a balanced neighbourhood of 0 in $A_{\varphi}$ so $\tilde{F}$ has a homogeneous expansion. That is, $\tilde{F} (z) = \sum_{j=0}^{\infty} Q_j(z) $ for $z \in \tilde{\Omega}$ where $Q_j$ denotes a homogeneous polynomial with  $\mbox{ deg } Q_j \leq j$. Define $v(z)= (\limsup_{j \to \infty}|Q_j(z)|^{1/j})^*$ for $z \in A_{\varphi}$.
By Cauchy's convergence criterion for numerical series $(\limsup_{j \to \infty}|Q_j(z)|^{1/j})^* \leq 1$ for $z \in \tilde{\Omega}$. Consequently $\tilde{\Omega} = \{ z\in A_{\varphi} : v(z) <1 \}$. By the homogeneity of $v$ and $H$ we have $v \equiv H$ on $A_\varphi$.\end{proof}

It is quite straightforward to show that the existence of a plurisubharmonic function $H:L' \mapsto [0,\infty)$ as in Theorem \ref{thm:hom-psh} is equivalent to the existence
of a positive singular hermitian metric on $L$, i.e., to $L$ being pseudoeffective. Namely (\cite{BS}, Theorem 1 and the references there), given such an $H$ and a system of trivializations $\theta_i: L'\mid_{U_i} \mapsto U_i \times \mathbb{C}$, the functions $h_i(x)= \log (H\circ \theta_i^{-1}(x,t)/|t|), \quad x \in U_i, t \neq 0$ give a positive singular metric $h^L$ on $L$, and the same formula defines $H$ as in Theorem \ref{thm:hom-psh} if the collection $\{h_i\}$ gives a positive singular metric. As an example of a positive singular metric one can take $\log|s|$, where $s$ is a holomorphic section of $L$.  Our theorems yield the following corollaries for positive singular metrics:\\

\begin{cor}\label{cor:hom-sing} Let a toric manifold $X$ and a line bundle $L$ be as above. Let $h$ be a positive singular metric on $L$. Then (in each triviaizing neighborhood) $\exp h(z)=\sup \{|Q(z)|^{1/(\mbox{deg }Q)}\}$, where $Q \in R_D$ are such that $|Q|^{1/(\mbox{deg }Q)}\leq h$.\\
\end{cor}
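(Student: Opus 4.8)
The plan is to deduce Corollary \ref{cor:hom-sing} directly from Theorem \ref{thm:hom-psh} by exploiting the correspondence between positive singular metrics on $L$ and fiberwise-homogeneous plurisubharmonic functions on $L'$ that is described in the paragraph immediately preceding the statement. First I would fix a trivializing neighborhood $U_i$ with $\theta_i: L'|_{U_i} \mapsto U_i \times \mathbb{C}$ and, given the positive singular metric $h = \{h_i\}$, define the function $H$ on $L'$ by the stated formula $H \circ \theta_i^{-1}(x,t) = |t|\exp h_i(x)$. The excerpt has already told us that this $H$ is a continuous fiberwise-homogeneous plurisubharmonic function on $L'$ which is nonnegative and not identically zero, so the main task is to verify that $H$ satisfies the remaining hypothesis of Theorem \ref{thm:hom-psh}, namely $H^{-1}(0) = Z_{L'}$. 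This should follow from the positivity (strict ampleness) assumptions built into the setup, so that $H$ vanishes exactly on the zero section.

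Once $H$ is in hand I would simply apply Theorem \ref{thm:hom-psh} to obtain
\[
H(z) = \sup\{|Q(z)|^{1/(\mbox{deg }Q)}\},
\]
the supremum taken over homogeneous polynomials $Q \in R_D$ with $|Q|^{1/(\mbox{deg }Q)} \leq H$. The final step is purely translational: using $\exp h_i(x) = H \circ \theta_i^{-1}(x,t)/|t|$ and the fiberwise homogeneity of $H$, I would push the identity for $H$ back down through $\theta_i$ to the base, noting that in the trivialization the homogeneous polynomial $Q$ of degree $d$ (a section of $dL$, i.e.\ an element of $R_D$) evaluates as $|t|^{d}$ times its local representative, so the factors of $|t|$ cancel after taking the $d$-th root. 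This yields $\exp h(z) = \sup\{|Q(z)|^{1/(\mbox{deg }Q)}\}$ over the same family of $Q$, with the condition $|Q|^{1/(\mbox{deg }Q)} \leq H$ transcribed to $|Q|^{1/(\mbox{deg }Q)} \leq h$ on the base.

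The step I expect to be the genuine obstacle is the bookkeeping of the $|t|$-factors and making precise the claim ``in each trivializing neighborhood'': one must check that the formula is well defined independent of the chosen trivialization (i.e.\ that it respects the transition functions of $L$), and that the homogeneity degrees match so that the $d$-th root really does kill the fiber coordinate cleanly. A secondary subtlety is that a positive singular metric need not be continuous or finite everywhere, whereas Theorem \ref{thm:hom-psh} was stated for continuous $H$ with $H^{-1}(0)=Z_{L'}$; I would therefore either restrict to the case where $h$ is continuous and strictly positive (so that the hypotheses of Theorem \ref{thm:hom-psh} hold verbatim), or indicate how an approximation argument extends the identity to general positive singular metrics. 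Modulo these points, the corollary is an essentially formal consequence of Theorem \ref{thm:hom-psh} together with the metric–function dictionary.
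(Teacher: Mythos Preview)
Your proposal is correct and matches the paper's approach: the paper gives no explicit proof for this corollary, treating it as an immediate consequence of Theorem~\ref{thm:hom-psh} via the metric--function dictionary spelled out in the preceding paragraph, which is exactly what you do. Your flagged subtlety about continuity is well taken---Theorem~\ref{thm:hom-psh} is stated for continuous $H$ with $H^{-1}(0)=Z_{L'}$, and the paper does not address how to pass to general positive singular metrics---so on that point your write-up is in fact more careful than the original.
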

\begin{cor}\label{cor:limsup-sing} Under the assumptions of Theorem \ref{thm:hom-psh}, $\exp h = (\limsup_{j \to \infty}|Q_j|^{1/j})^*$ for some sequence of $Q_j \in R_D$ such that $Q_j \in \Gamma(X,jL)$.
\end{cor}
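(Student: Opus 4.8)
The plan is to obtain Corollary \ref{cor:limsup-sing} as a direct translation of Theorem \ref{thm: limsup} through the dictionary between positive singular metrics on $L$ and fiberwise-homogeneous plurisubharmonic functions on $L'$ recalled just before the corollaries. First I would start from the given positive singular metric $h$ and, using the system of trivializations $\theta_i : L'|_{U_i} \to U_i \times \mathbb{C}$, define $H$ on $L'$ by the stated formula, so that $H \circ \theta_i^{-1}(x,t) = |t|\,\exp h_i(x)$. By the standing hypothesis (``under the assumptions of Theorem \ref{thm:hom-psh}'') this $H$ is continuous, plurisubharmonic, homogeneous of order one in each fiber, nonnegative, and vanishes exactly on $Z_{L'}$; thus Theorem \ref{thm: limsup} applies and yields $H = (\limsup_{j\to\infty}|Q_j|^{1/j})^*$ for homogeneous polynomials $Q_j$ on $A_\varphi$ with $\deg Q_j \leq j$.

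Next I would pass the polynomials $Q_j$ through the graded ring isomorphism of the Proposition above, namely $R_D = \bigoplus_{d}\Gamma(X,dL) \cong \mathbb{C}[\mathbb{Z}^{n+1}\cap C_\varphi^*]$. Under this isomorphism a homogeneous polynomial of degree $d$ in the coordinates $t_0^d \chi^m(t)$ (with $m \in dP_D$) corresponds precisely to a global section in $\Gamma(X,dL)$; choosing $Q_j$ to be the $j$-th homogeneous component of the defining holomorphic function makes $Q_j$ a section of $jL$, possibly the zero section, which is exactly what the bound $\deg Q_j \leq j$ records. Hence each $Q_j$ is simultaneously an element of $R_D$ and a member of $\Gamma(X,jL)$, as required by the statement.

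Finally I would restrict the identity $H = (\limsup_{j}|Q_j|^{1/j})^*$ to the unit circle bundle over each trivializing neighborhood $U_i$. There $|t| = 1$ gives $H \circ \theta_i^{-1}(x,t) = \exp h_i(x)$, while a section $Q_j \in \Gamma(X,jL)$, written in the trivialization as $Q_j \circ \theta_i^{-1}(x,t) = t^{\,j}\, s_{i,j}(x)$, satisfies $|Q_j \circ \theta_i^{-1}(x,t)|^{1/j} = |s_{i,j}(x)|^{1/j}$ on $|t|=1$, which is exactly the local representative of $|Q_j|^{1/j}$ as a section-norm on the base. Since both $H$ and each $|Q_j|^{1/j}$ are homogeneous of order one in the fiber, the upper-semicontinuous regularization computed on $L'$ agrees, after restriction to $|t|=1$, with the regularization on $U_i$; the identity therefore descends to $\exp h = (\limsup_{j\to\infty}|Q_j|^{1/j})^*$ on $U_i$, and as the $U_i$ cover $X$ this proves the corollary.

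The step I expect to require the most care is the bookkeeping in this last descent: one must verify that the fiberwise homogeneity degree of a section of $dL$ and the normalizing exponent $1/j$ are matched so that the lower-degree (in particular vanishing) components do not corrupt the homogeneity-one limit, and that $(\cdot)^*$ is genuinely unaffected by passing from $L'$ to the base. This is precisely the point at which the distinction between ``$\deg Q_j \leq j$'' in Theorem \ref{thm: limsup} and ``$Q_j \in \Gamma(X,jL)$'' in the corollary is reconciled, by reading $Q_j$ as the (possibly zero) $j$-th homogeneous part.
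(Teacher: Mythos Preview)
Your proposal is correct and follows essentially the same approach as the paper: the paper does not give a separate proof of this corollary, but simply states that it (and the companion Corollary~\ref{cor:hom-sing}) follows from Theorems~\ref{thm:hom-psh} and~\ref{thm: limsup} via the dictionary $h_i(x)=\log\bigl(H\circ\theta_i^{-1}(x,t)/|t|\bigr)$ between positive singular metrics on $L$ and fiberwise-homogeneous psh functions on $L'$, which is exactly the translation you spell out. Your extra care in reconciling ``$\deg Q_j\le j$'' with ``$Q_j\in\Gamma(X,jL)$'' by reading $Q_j$ as the (possibly zero) $j$-th homogeneous component is the right reading of the homogeneous expansion used in the proof of Theorem~\ref{thm: limsup}.
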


\section{Quasi-plurisubharmonic functions}

From \cite{GZ} we have the following relation between positive singular metrics and quasi-plurisubharmonic functions:
Consider the metric with the weight $\lambda$ (or fix any smooth metric on the line bundle $L$ over $X$) ,
and set $\omega = dd^c \lambda$. The class $PSH(X,\omega)$ of functions $u \in L^1(X,\mathbb{R}\cup \{-\infty\}$ such that $u$
is upper semicontinuous and satisfies $dd^c u \geq -\omega$ can be obtained by taking all positive singular metrics $\psi$ on  $L$
and setting $u = \psi - \lambda$. Conversely, if $u \in PSH(X,\omega)$, then $u+\lambda$ defines a positive singular metric on $L$.
We call $PSH(X,\omega)$ the Lelong class on $X$.

Our next task is to provide a characterization of positive singular metrics \newline
and quasi-plurisubharmonic functions on toric varieties.
We will use the properties of the integer-valued function $\varphi_D$ and the polyhedron $P_D$ associated with the Cartier divisor $D$ on $X$:
$P_D=\{m \in (\mathbb{Z}^n)^{\breve{}}=M: <m,v_i> \geq -a_i \forall i \}=\{m \in M: m \geq \varphi_D \}$.

In what follows we will assume that  $\mathcal{O}_X(D)$ is very ample or, equivalently, that $P=P_D$ is a very ample full-dimensional lattice polyhedron ($\mbox{dim }P=n$). (This term roughly means that $P$ contains enough lattice points; see Definition 7.1.8 in \cite{CLS} for its precise definition and Proposition 6.1.4 for equivalence of  the line bundle and the associated polytope being very ample. The conditions of being ample and being very ample are equivalent for torus invariant line bundles over toric manifolds. )  If  $P \cap M=\{m_1,...,m_r\}$, then the toric variety $X$ is the Zariski closure of the map $\Phi: T^n  \ni t \mapsto (\chi^{m_1}(t),...,\chi^{m_r}(t)) \in \mathbb{CP}^{r-1}$. Let $x_1,...,x_r$ denote the homogeneous coordinates in $\mathbb{CP}^{r-1}$ and let $U_i = \{x_i \neq 0\}$. According to \cite{CLS}, Theorem 2.3.1, for each vertex $m_i$ in $P \cap M$, 
the affine piece $X \cap U_i$ is the affine toric variety $U_{\sigma_i}=\mbox{Spec }\mathbb{C}[\breve{\sigma_i} \cap M]$, where $\sigma_i$ is the strongly convex rational polyhedral cone dual to the cone $\mbox{Cone }(P\cap M -m_i)$ and $\mbox{dim }\sigma_i=n$. The  Cartier data $m_\sigma$ are also  related to the vertices of $P$. Namely, by Theorem 6.1.10 d,  $D$ is basepoint-free if and only if   $\{m_\sigma\}$ is the set of vertices of $P$. Hence, if $D$ is very ample,  then for every maximal cone $\sigma \in \Sigma$ the semigroup $\breve{\sigma} \cap M$ is generated by $\{m-m_\sigma: m \in P_D \cap M\}$.

Note that
$\lambda =\log (\sum_{m \in  P_D \cap M}|\chi^m(z)|^2)^{1/2}$ is the pullback of the Fubini-Study metric
from $\mathbb{CP}^{r-1}$ by $\Phi$. We will use this metric to define the Lelong class $PSH(X,\omega)$ on $X$.

\begin{prop}
 Let $X$ be a smooth toric variety as in Theorem \ref{thm:hom-psh}. Then $h$ is (a weight of) a positive singular metric  if and only if on each affine chart $U_\sigma$ we have
$e^u=(\limsup_{j \to \infty}|P_j|^{1/j})^*$ for some sequence of
complex polynomials in the variables $\chi^{m-m_\sigma}, m \in M$ such that $\mbox{deg }P_j
\leq j$.
\end{prop}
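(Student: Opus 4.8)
The plan is to obtain the local statement from the global representation of Corollary \ref{cor:limsup-sing} by passing through the trivialization of $L$ over $U_\sigma$ given by the vertex $m_\sigma$, and conversely to recognize the regularized $\limsup$ as a plurisubharmonic weight. Throughout I read the condition as a statement about the local weight $u$ of $h$ in the $m_\sigma$-frame on $U_\sigma$, so that the compatibility of the $u$ across charts is built in once $h$ is a global object.

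For the forward implication I would start from a positive singular metric $h$; by the correspondence recalled just before Corollary \ref{cor:hom-sing} it is given by a continuous homogeneous plurisubharmonic $H$ on $L'$ with $H^{-1}(0)=Z_{L'}$, and Corollary \ref{cor:limsup-sing} gives $H=(\limsup_j|Q_j|^{1/j})^*$ with $Q_j\in\Gamma(X,jL)$. Over $U_\sigma$ I fix the frame associated with $m_\sigma$: in the trivialization $L'|_{U_\sigma}\cong U_\sigma\times\mathbb{C}$ with fibre coordinate $t$ one writes $H(x,t)=|t|\,e^{u(x)}$, which defines the local weight $u$, and $Q_j(x,t)=t^jP_j(x)$, where $P_j$ is the section read off in the frame. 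The key point is that $P_j$ is a polynomial of degree $\le j$ in the coordinates $\chi^{m-m_\sigma}$, $m\in P_D\cap M$: since $L$ is very ample the section ring $R_D$ is generated in degree one, so $Q_j$ is a degree-$j$ combination of elements of $\Gamma(X,L)$, each of which becomes, in the $m_\sigma$-frame, one of the generators $\chi^{m-m_\sigma}$ of the semigroup $\breve{\sigma}\cap M$ (recall that this semigroup is generated by $\{m-m_\sigma:m\in P_D\cap M\}$). Dividing $|Q_j|^{1/j}=|t|\,|P_j|^{1/j}$ by the continuous nonvanishing factor $|t|$ on $\{t\neq0\}$, and using that upper regularization factors through multiplication by the positive continuous function $|t|$, the equality $H=(\limsup_j|Q_j|^{1/j})^*$ descends to $e^{u}=(\limsup_j|P_j|^{1/j})^*$ on $U_\sigma$.

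For the converse, given representations $e^{u}=(\limsup_j|P_j|^{1/j})^*$ on each $U_\sigma$ with $\deg P_j\le j$, I would note that $\tfrac1j\log|P_j|$ is plurisubharmonic on the smooth affine chart $U_\sigma$, and that the existence of the representation forces $\limsup_j\tfrac1j\log|P_j|$ to be locally bounded above; hence by the Brelot--Cartan theorem (cf. \cite{Kl}) its upper regularization $u=(\limsup_j\tfrac1j\log|P_j|)^*$ is plurisubharmonic. Since $u$ is the local weight of $h$ in the $m_\sigma$-frame, the transition functions $\chi^{m_\sigma-m_{\sigma'}}$ of $L$ relate the weights on overlaps, so the plurisubharmonic local weights patch to a metric on $L$ with positive curvature current, i.e.\ $h$ is a positive singular metric.

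The hard part will be the forward localization: one must check carefully that reading a global section $Q_j\in\Gamma(X,jL)$ in the $m_\sigma$-trivialization yields a polynomial of degree at most $j$ in the chart coordinates $\chi^{m-m_\sigma}$ — this is precisely where very ampleness enters, through degree-one generation of $R_D$ and of the vertex semigroups $\breve{\sigma}\cap M$ — and that the fibrewise homogeneity factor $|t|$ can be cancelled through both the $\limsup$ and the upper regularization, so that the representation genuinely descends from $L'$ to the base chart $U_\sigma$.
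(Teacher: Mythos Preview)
Your approach is correct and follows the same route as the paper: reduce to Theorem~\ref{thm: limsup} (via its metric reformulation, Corollary~\ref{cor:limsup-sing}) and then localize using the identification $\Gamma(U_\sigma,\mathcal{O}_X)=\mathbb{C}[\breve{\sigma}\cap M]$ with polynomials in the variables $\chi^{m-m_\sigma}$. The paper's own proof is a two-line sketch that simply cites this identification and Theorem~\ref{thm: limsup}; your version spells out the trivialization, the degree bound via very ampleness, and the converse direction, all of which the paper leaves implicit.
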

\begin{proof} On the affine open set $U_\sigma$ associated with a maximal cone $\sigma$ one has 
\[
\Gamma(U_\sigma,\mathcal{O}_X)= \mathbb{C}[\breve{\sigma_v}\cap M].
\] In terms of characters on the torus, the last set coincides with complex polynomials in the variables $\chi^{m-m_\sigma}, m \in M$. The result now follows from Theorem \ref{thm: limsup}. 
\end{proof}

In analogy to the case of $\mathbb{CP}^n$, a growth condition for plurisubharmonic functions on a complex torus was introduced  in \cite{Be}, section 4. It generalizes the growth condition for the Lelong class in $\mathbb{C}^n$ for the purpose of studying quasi-plurisubharmonic functions on toric manifolds. Moreover, it is equivalent to the statement that an $\omega$-quasi-psh function $v$  on a toric manifold $X$ corresponds uniquely to a plurisubharmonic function $u$ on the torus $T^n$ satisfying $u(z_1,...,z_n) \leq \psi(z_1,...,z_n)+ \mathcal{O}(1):= \sup_{m \in P_D} <m,(\log|z_1|,...,\log |z_n|)> + \mathcal{O}(1)$. (Note that $\psi$ is the composition of the support function of $P_D$ with the map $z \mapsto (\log|z_1|,...,\log |z_n|)$.) Accordingly, $u -\psi + \lambda$ is a positive singular metric. We will now use our approximation results from the previous section to prove that this condition provides unique characterization of $\omega$-quasi-psh functions.

\begin{thm} Let  $X=X_\Sigma$ be a $n$-dimensional toric manifold with a complete fan $\Sigma$ and let $D$ be a very ample divisor on $X$. Let  a plurisubharmonic function $u$ on the torus $T^n$  be given.Then  $v = u(z_1,...,z_n)-\psi(\log|z_1|,...,\log |z_n|)$ extends to an $\omega$-psh function on $X$ if and only if $u$ satisfies $u(z_1,...,z_n) \leq \psi(\log|z_1|,...,\log |z_n|) + \mathcal{O}(1):= \sup_{m \in P_D} <m,(\log|z_1|,...,\log |z_n|)> + \mathcal{O}(1)$.
\end{thm}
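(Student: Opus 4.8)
The plan is to prove the two implications separately, using the correspondence between $\omega$-psh functions on $X$, positive singular metrics on $L$, and fiber-homogeneous psh functions on the total space of $L'$, together with the approximation result of Theorem \ref{thm: limsup}.

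First I would handle the \emph{necessity} direction: assume $v = u(z_1,\dots,z_n)-\psi(\log|z_1|,\dots,\log|z_n|)$ extends to an $\omega$-psh function on all of $X$. Since $v \in PSH(X,\omega)$ and $X$ is compact, $v$ is bounded above, say $v \leq C$. Unwinding the definition of $v$ on the torus $T^n \subset X$ gives immediately $u(z_1,\dots,z_n) \leq \psi(\log|z_1|,\dots,\log|z_n|) + C$, which is exactly the claimed growth estimate with $\mathcal{O}(1) = C$. The only point to check here is that the global upper bound on the compact manifold genuinely restricts to a bound on the dense torus, and that $\psi$ defined via the support function of $P_D$ agrees with the local weight $\lambda$ up to $\mathcal{O}(1)$; this follows because $\lambda = \log(\sum_{m \in P_D \cap M}|\chi^m|^2)^{1/2}$ differs from $\sup_{m \in P_D}\langle m,(\log|z_1|,\dots,\log|z_n|)\rangle$ by a bounded amount (the usual comparison of a sum of exponentials with their maximum).

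The harder direction is \emph{sufficiency}: assume the growth estimate $u \leq \psi + \mathcal{O}(1)$ holds on $T^n$, and produce an $\omega$-psh extension of $v$ to $X$. The strategy is to set $v = u - \psi$ on $T^n$ and show that $v$ extends as an upper semicontinuous function across each boundary divisor $D_i$ with $dd^c v \geq -\omega$. I would work chart by chart on the affine pieces $U_\sigma$ for each maximal cone $\sigma$. On $U_\sigma$ the local weight for $\omega$ is, up to $\mathcal{O}(1)$, the linear piece $\langle m_\sigma,(\log|z_1|,\dots,\log|z_n|)\rangle$ of the support function, so the local expression of $v + \lambda$ equals $u - \psi + \langle m_\sigma, \cdot\rangle + \mathcal{O}(1)$. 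Since $\psi = \sup_{m \in P_D}\langle m,\cdot\rangle \geq \langle m_\sigma,\cdot\rangle$ with equality precisely on the cone $\sigma$, the growth hypothesis forces $u + \langle m_\sigma,\cdot\rangle - \psi$ to be bounded above near the part of $\partial X$ corresponding to $\sigma$, which is what lets the local potential extend as a psh function across the divisors meeting $U_\sigma$. Concretely, the candidate local potential is psh on the torus part of $U_\sigma$ and bounded above, so it extends to a psh function on $U_\sigma$ by the standard removable-singularity theorem for psh functions bounded above across a pluripolar (here analytic) set. Patching these local extensions and subtracting $\lambda$ yields the global $\omega$-psh function $v$.

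I expect the main obstacle to be the \emph{gluing and boundary regularity}: one must verify that the locally extended potentials agree on overlaps $U_\sigma \cap U_{\sigma'}$ (so that they define a single global positive singular metric on $L$) and that the extension across the boundary divisors is genuinely $\omega$-psh rather than merely psh on the open torus. The agreement on overlaps should follow from the cocycle relations for the transition functions $\chi^{m_\sigma - m_{\sigma'}}$ of $L = \mathcal{O}_X(D)$, since $v$ is defined intrinsically on $T^n$ and only $\lambda$ (the fixed smooth metric weight) changes between charts. The boundary regularity is the delicate point, and here I would invoke either the bounded-above extension theorem for psh functions across an analytic subset or, alternatively, the approximation machinery: by Theorem \ref{thm: limsup} and Corollary \ref{cor:limsup-sing}, a positive singular metric can be written as a regularized upper limit of $|Q_j|^{1/j}$ for sections $Q_j \in \Gamma(X,jL)$, so exhibiting $v + \lambda$ as such a limit would automatically guarantee it is a globally defined positive singular metric, hence that $v$ is $\omega$-psh on all of $X$. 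Tying the growth condition to the existence of the bounded-above local potentials, and thence to this approximation, is where the real content of the argument lies.
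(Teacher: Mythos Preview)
Your argument is correct, but you have essentially swapped the roles of the two directions compared with the paper. For \emph{necessity} (extension $\Rightarrow$ growth) you use the elementary fact that an upper semicontinuous function on the compact manifold $X$ is bounded above, which immediately gives $u\le\psi+C$ on the torus; the paper instead invokes Corollary~\ref{cor:limsup-sing} to write $u-\psi+\lambda=(\limsup_j (1/j)\log|Q_j|)^*$ with $Q_j\in\Gamma(X,jL)$, and then uses $(1/j)\log|Q_j|-\lambda\le\mathcal{O}(1)$ to conclude. For \emph{sufficiency} (growth $\Rightarrow$ extension) you work chart by chart, checking that on each $U_\sigma$ the local potential $u-\langle m_\sigma,\log|z|\rangle$ is psh and locally bounded above (using $u\le\psi+C$ and the fact that the $\chi^{m-m_\sigma}$ are affine coordinates on $U_\sigma$), and then apply the removable-singularity theorem across the analytic set $U_\sigma\setminus T^n$; the paper handles this direction in one line via $dd^c v\ge dd^c(-\psi)=-[D]$. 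Your route is more self-contained and does not rely on the approximation machinery developed earlier in the paper; the paper's route, by contrast, is designed to exhibit Corollary~\ref{cor:limsup-sing} at work. Both are valid, and your gluing concern is resolved exactly as you say, by the cocycle relations for $\chi^{m_\sigma-m_{\sigma'}}$.
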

\begin{proof} 
 If $u$ is a psh function satisfying the growth condtion, then $v$ and its extension to $X$ satisfy $dd^c v \geq dd^c (-\psi) =-[D]$. Conversely, Corollary  \ref{cor:limsup-sing}, $u-\psi + \lambda=  (\limsup_{j \to \infty}(1/j)\log |Q_j|)^*$ for some sequence $Q_j \in \Gamma(X, \mathcal{O}_X(jD)), \quad j=1,2,...$. We have $(1/j)\log |Q_j|)-\lambda \leq \mathcal{O}(1)$ for all $j$, and the inequality persists under the limit superior and upper semicontinuous regularization, so $u-\psi \leq \mathcal{O}(1)$.
\end{proof}

\section{Application: approximation theorem for curvature currents}

In this section we will apply Theorem \ref{thm: limsup} to prove an approximation theorem for first Chern classes of ample line bundles  over projective toric manifolds. 
Recall the following:
\begin{defin} (cf. \cite{De2}, Section 6) Let $L$ be a holomorphic line bundle over a compact complex manifold $X$.\\
(a) We say that $L$ is effective if $dL$ has a section for some $d > 0$, i.e., $\mathcal{O}(dL)=\mathcal{O}(D)$ for some effective divisor $D$ on $X$;\\

(b) The convex cone generated by $c_1(L)$ in $H^2(X,R)$ for $L$ effective is denoted by $N_{eff}$;  if $L$ is ample, the corresponding convex cone is denoted by $N_{amp}$. 

\end{defin}
Recall also that the first Chern class of a line bundle $L$ is its Euler class, i.e., the Poincar\`{e} dual to the homology class of the zero locus of a generic smooth section.\\

Using the results of the previous section we will now prove that over a projective toric manifold $X$
 we have $N_{amp} \subset \overline{N_{eff}}$. This result can be derived from (\cite{De2}, Proposition 6.6) for an arbitrary projective algebraic manifold, not necessarily toric. In a series of inclusions and equalities, one has  
$N_{amp} \subset N_{nef} \subset N_{psef}$ and then $N_{psef} =  \overline{N_{eff}}$, where $N_{nef}$ and $N_{psef}$ denote the convex cones generated by first Chern classes of respectively numerically effective and peudoeffective line bundles. We refer to Demailly's article \cite{De2} for definition of these notions, since we wil not use them.  (Note also that on toric varieties built from convex fans all nef line bundles are ample; this is the toric Nakai criterion, see e.g. \cite{Mu}, Theorem 3.2). We will also not use the equality  $N_{psef} =  \overline{N_{eff}}$ or Lelong numbers, which play central role in its proof. Instead, we will prove directly  the inclusion $N_{amp} \subset \overline{N_{eff}}$ on a toric projective manifold, relying only on Theorem \ref{thm: limsup}. \\

Recall that to compute the squared norm of a holomorphic section
$s$ of $L$ on an open set $U$ with respect to the metric $h^L$ one
takes $|s|^2_{h^L}=|f|^2e^{-2\phi}$, where $s=fe^L$ with $f \in
\mathcal{O}_X(U)$, $e^L$ is a local holomorphic frame for $L$ and
$\phi \in L^1_{loc}$ is a local weight for $h^L$ in $U$. Let $L$ and $h^L$ be fixed. It follows that for a section $s=\{s_i\}$ of $L^j, \quad j \geq 1$ one has, in a trivialization $\theta_i$, $\|s\|_{jh}=|s_i|\exp (-jh)$.

We will need a generalized Poincar\'{e}-Lelong formula for singular metrics:\\

\begin{thm}\label{thm:PL} (\cite{MM},  Theorem 2.3.3) Let $s$ be a meromorphic section of a holomorphic line bundle $L$ over a compact connected complex
manifold $X$ and let $h^L$ be a singular hermitian metric on $L$. Then
\[
\frac{\sqrt{-1}}{2 \pi}\partial \bar{\partial} \log |s|^2_{h^L}= [\mbox{Div}(s)] - c_1(L,h^L).
\]
\end{thm}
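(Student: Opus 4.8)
The plan is to reduce the asserted global identity of currents to a local computation in trivializations, where it decouples into the classical Lelong--Poincar\'e formula for the local meromorphic component of $s$ and the very definition of the curvature current as $dd^c$ of the local weight. Throughout I abbreviate $dd^c := \frac{\sqrt{-1}}{2\pi}\partial\bar\partial$.

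First I would check that each of the three terms is a well-defined $(1,1)$-current on all of $X$. The function $\log|s|^2_{h^L}$ is intrinsic (the pointwise norm does not depend on a trivialization) and lies in $L^1_{loc}$, since in any chart it equals $\log|f|^2 - 2\phi$ with $f$ meromorphic and the weight $\phi \in L^1_{loc}$; hence $dd^c\log|s|^2_{h^L}$ is a genuine current. The integration current $[\mbox{Div}(s)]$ is globally defined because a holomorphic change of frame multiplies the local representative $f$ by a nowhere-vanishing holomorphic function, leaving its divisor unchanged. Finally $c_1(L,h^L)$ is globally defined because under such a change the weight $\phi$ changes by $\log|g|$ for a nowhere-vanishing holomorphic transition function $g$, a pluriharmonic term which is annihilated by $dd^c$.

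Next, fixing a trivializing chart with holomorphic frame $e^L$, weight $\phi$ (so $|e^L|^2_{h^L}=e^{-2\phi}$), and writing $s=f e^L$ with $f$ meromorphic, one has $\log|s|^2_{h^L}=\log|f|^2-2\phi$, whence by linearity
\[
dd^c\log|s|^2_{h^L}=dd^c\log|f|^2-dd^c(2\phi).
\]
Here $dd^c(2\phi)=c_1(L,h^L)$ in this chart by the definition of the Chern curvature current, while the classical Lelong--Poincar\'e formula gives $dd^c\log|f|^2=[\mbox{Div}(f)]$, which coincides with $[\mbox{Div}(s)]$ locally. Subtracting yields the formula in the chart, and since all three terms are intrinsic global currents the local identities patch to the stated global equality.

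The main obstacle is the classical scalar Lelong--Poincar\'e identity $dd^c\log|f|^2=[\mbox{Div}(f)]$, together with the justification that it persists when the weight $\phi$ is only quasi-plurisubharmonic. For holomorphic $f$ the function $\log|f|$ is plurisubharmonic, hence $L^1_{loc}$, and $dd^c\log|f|$ is a positive closed current supported in $\{f=0\}$; identifying it with the integration current weighted by the multiplicities of the irreducible components of the zero divisor is the analytic heart of the argument, carried out by reducing to the one-variable Green identity $dd^c\log|z|^2=\delta_0$ and slicing (the meromorphic case then follows by writing $f$ locally as a quotient of holomorphic functions). Once this is in hand, the presence of a singular metric adds nothing essential: because $\phi\in L^1_{loc}$ the combination $\log|f|^2-2\phi$ is again $L^1_{loc}$, so $dd^c$ acts on it as a continuous operator on currents, and the passage from a smooth weight to a singular one is purely formal.
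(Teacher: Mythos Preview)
Your outline is correct and is essentially the standard argument (as in the cited reference [MM], Theorem 2.3.3, or in Demailly's notes): reduce to local trivializations, split $\log|s|^2_{h^L}=\log|f|^2-2\phi$, invoke the classical Lelong--Poincar\'e formula for the meromorphic scalar $f$, and use the definition of $c_1(L,h^L)$ as $dd^c$ of the local weight. Note, however, that the paper does not give its own proof of this statement---it is quoted from [MM] as a tool for the subsequent Theorem~\ref{thm:psef}---so there is no in-paper proof to compare against; your write-up simply supplies the standard justification that the paper leaves to the reference.
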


We will also need a sequential compactness criterion for currents.
(Recall that a set $\mathcal{T}$ of $(p,p)$-currents of order $0$
on an open domain $\Omega \subset \mathbb{C}^n$ is sequentially
compact in the weak*-topology if every sequence in $\mathcal{T}$
contains a convergent subsequence.) The following can be found in
\cite{Kl}, p. 109:

\begin{prop} A sufficient condition for $\mathcal{T}$ to be sequentially compact in the weak*-topology is that for each relatively compact
subdomain  $G \subset \Omega$ there exists a constant $C > 0$ such
that $|T(\omega)| \leq C \|\omega\|_G$ for every current $T \in
\mathcal{T}$ and every differential form $\omega \in
\mathcal{C}_0^{\infty}(G,
\bigwedge^{n-p,n-p}(\mathbb{C}^n,\mathbb{C}))$.
\end{prop}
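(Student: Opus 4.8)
The plan is to prove this by a Helly-type diagonal selection argument, which is the standard route to sequential versions of the Banach--Alaoglu theorem when the relevant predual is separable. A $(p,p)$-current $T$ of order $0$ on $\Omega$ acts continuously on the space of compactly supported continuous test forms equipped with the sup-norm, and the hypothesis $|T(\omega)| \le C\|\omega\|_G$ says precisely that the family $\mathcal{T}$ is \emph{equibounded} on each relatively compact subdomain $G \Subset \Omega$. The first step is to fix an exhaustion $G_1 \Subset G_2 \Subset \cdots$ of $\Omega$ by relatively compact open subdomains with $\bigcup_j G_j = \Omega$, together with the corresponding constants $C_j$ furnished by the hypothesis.

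Given a sequence $\{T_k\} \subset \mathcal{T}$, I would first extract a subsequence converging on all test forms supported in $G_1$. The key point is that $\mathcal{C}_0^\infty(G_1, \bigwedge^{n-p,n-p})$ is separable in the sup-norm $\|\cdot\|_{G_1}$ (being a subspace of the separable Banach space of continuous compactly supported forms with values in a finite-dimensional space); choose a countable dense subset $\{\omega_i\}_{i \ge 1}$. For each fixed $i$ the scalar sequence $\{T_k(\omega_i)\}_k$ is bounded by $C_1\|\omega_i\|_{G_1}$, hence admits a convergent subsequence by Bolzano--Weierstrass. A diagonal argument over $i$ then produces a single subsequence, still denoted $\{T_k\}$, along which $\lim_k T_k(\omega_i)$ exists for every $i$.

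Next I would upgrade convergence on the dense set to convergence on all of $\mathcal{C}_0^\infty(G_1, \bigwedge^{n-p,n-p})$. For an arbitrary test form $\omega$ supported in $G_1$ and any approximant $\omega_i$, the equibound gives $|T_k(\omega) - T_\ell(\omega)| \le 2C_1\|\omega - \omega_i\|_{G_1} + |T_k(\omega_i) - T_\ell(\omega_i)|$, so choosing $\omega_i$ close to $\omega$ and then $k,\ell$ large shows $\{T_k(\omega)\}_k$ is Cauchy, hence convergent. The resulting limit $T(\omega) := \lim_k T_k(\omega)$ is linear and inherits the bound $|T(\omega)| \le C_1\|\omega\|_{G_1}$, so it defines a current of order $0$ on $G_1$. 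Diagonalizing this construction once more over the exhaustion $\{G_j\}$ yields a single subsequence that converges against every compactly supported test form on $\Omega$, and its limit $T$ is the desired weak-* limit.

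The main obstacle, and the point requiring the most care, is the interface between the smooth test forms appearing in the hypothesis and the continuous test forms on which order-$0$ currents naturally act: one must invoke the density of $\mathcal{C}_0^\infty$ in $\mathcal{C}_0$ by mollification, together with the separability just mentioned, to justify the diagonal extraction, and then verify that the equibound extends the limit functional continuously to all continuous compactly supported forms. I would also note that, strictly speaking, the argument yields \emph{relative} sequential compactness: the weak-* limit $T$ is a current of order $0$, but it lies in $\mathcal{T}$ itself only when $\mathcal{T}$ is weak-* closed, which is the sense in which the criterion is applied in the sequel.
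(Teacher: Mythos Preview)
Your argument is correct and follows the standard diagonal-selection route to sequential weak*-compactness: exhaust $\Omega$ by relatively compact subdomains, use separability of the compactly supported test forms together with the uniform bound to extract, via Bolzano--Weierstrass and a diagonal process, a subsequence converging on a dense set, then upgrade to all test forms by the $3\varepsilon$ estimate. Your remarks on the smooth-versus-continuous interface and on the limit lying only in the weak*-closure of $\mathcal{T}$ are accurate and well placed.

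There is, however, nothing to compare against: the paper does not supply its own proof of this proposition. It is quoted verbatim as a known fact from Klimek's monograph \cite{Kl}, p.~109, and is used as a black box in the proof of Theorem~\ref{thm:psef}. So your write-up is not an alternative to the paper's argument but rather a filling-in of a result the authors chose to cite. The approach you give is essentially the one found in standard references (including Klimek), so in that sense it matches the intended background.
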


Now we will prove the inclusion $N_{amp} \subset \overline{N_{eff}}$ in the toric case:\\

\begin{thm}\label{thm:psef} Let $X$ be a toric manifold with a torus-invariant ample line bundle $L$. Then
$c_1(L,h^L) \in \overline{N_{eff}}$.
\end{thm}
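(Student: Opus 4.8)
The plan is to realize the curvature current $c_1(L,h^L)$ as a weak limit of normalized integration currents along effective divisors, and then to read off the cohomology class from the approximants. Since $L$ is ample, I would fix a positive singular metric $h^L$ with local weights $h$ and let $H$ be the associated homogeneous plurisubharmonic function on $L'$ as in Theorem \ref{thm:hom-psh}. Corollary \ref{cor:limsup-sing} then supplies a sequence of sections $Q_j\in\Gamma(X,jL)$ with
\[
h=\Big(\limsup_{j\to\infty}\tfrac1j\log|Q_j|\Big)^{*}.
\]
Writing $u_j=\tfrac1j\log|Q_j|$ for the local plurisubharmonic functions attached to $Q_j$ in a fixed system of trivializations, one has $u_j\to h$ in the limsup-regularized sense above.

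Next I would apply the generalized Poincar\'e--Lelong formula (Theorem \ref{thm:PL}) to each holomorphic section $Q_j$ of $jL$, equipped with the induced metric $jh^L$, obtaining
\[
\frac{\sqrt{-1}}{2\pi}\,\partial\bar\partial\,\log|Q_j|^2_{jh^L}=[\operatorname{Div}(Q_j)]-j\,c_1(L,h^L).
\]
Dividing by $2j$ and using $\tfrac1{2j}\log|Q_j|^2_{jh^L}=u_j-h$ in each trivialization yields the key identity
\[
T_j:=\frac1j\,[\operatorname{Div}(Q_j)]=c_1(L,h^L)+\frac{\sqrt{-1}}{2\pi}\,\partial\bar\partial\,(u_j-h).
\]
Each $T_j$ is a positive closed $(1,1)$-current, and since $\operatorname{Div}(Q_j)$ is an effective divisor in the class of $jD$, its cohomology class equals $\tfrac1j c_1(jL)=c_1(L)$; because $jL$ is effective, this class lies in $N_{eff}$.

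Finally I would pass to the limit. All the $T_j$ carry the fixed class $c_1(L)$, hence have uniformly bounded mass against a K\"ahler form, so the sequential compactness criterion stated above (applied in local charts) produces a subsequence converging weakly to a positive closed current $T$ with $[T]=c_1(L)$, the class being preserved in the limit by continuity of pairing against closed smooth forms. The identity above, together with $u_j\to h$, forces $T=c_1(L,h^L)$, since the correction terms $\tfrac{\sqrt{-1}}{2\pi}\partial\bar\partial(u_j-h)$ tend to $0$ weakly once $u_j\to h$ in $L^1_{loc}$. As each $T_j$ represents a class in $N_{eff}$ and $T$ is their weak limit with class $c_1(L)$, I conclude $c_1(L,h^L)\in\overline{N_{eff}}$.

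The main obstacle is precisely this convergence step: Corollary \ref{cor:limsup-sing} controls $h$ only as the upper semicontinuous regularization of $\limsup u_j$, which does not by itself give $L^1_{loc}$ convergence of the $u_j$. I would bridge this gap with a Hartogs-type argument, extracting an $L^1_{loc}$-convergent subsequence from the locally uniformly bounded-above family $\{u_j\}$, whose limit is plurisubharmonic and regularizes to $h$, and then combining it with the weak compactness of the $T_j$ to identify the limit current. Along the way one must keep the normalization constants in $dd^c$ and the local-to-global passage in the Poincar\'e--Lelong formula mutually consistent.
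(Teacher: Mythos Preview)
Your proposal is correct and follows essentially the same route as the paper: apply Poincar\'e--Lelong to the sections $Q_j$ supplied by the $\limsup$ approximation, rewrite $\tfrac{1}{j}[\mathrm{Div}(Q_j)]-c_1(L,h^L)$ as $\tfrac{\sqrt{-1}}{2\pi}\partial\bar\partial(u_j-h)$ with $u_j=\tfrac{1}{j}\log|Q_j|$, and then use a weak compactness argument to make the right-hand side tend to zero along a subsequence. The only difference is in how the convergence step is executed: the paper passes to the auxiliary decreasing sequence $v_k=(\sup_{j\ge k}u_j)^*$ and appeals to Lebesgue dominated convergence to get $v_k^*\to\log H$ as currents, while you propose to stay with the $u_j$ themselves and extract an $L^1_{loc}$-convergent subsequence via a Hartogs-type compactness principle. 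The obstacle you single out---that $(\limsup u_j)^*=h$ does not by itself force $u_j\to h$ in $L^1_{loc}$---is precisely what the paper's $v_k$ device is designed to handle, so the two arguments are really variants of one another.
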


\begin{proof} In a trivializing open neighborhood $U_\iota$ for $L'$ one has the equation
$2\phi=\log(H \circ \theta_\iota^{-1}(x,t))-\log |t|, t \in
\mathbb{C}^*$ for the local weight of $h^L$, where $H$ satisfies the assumption
Theorem \ref{thm:hom-psh}. It follows that  $$\frac{\sqrt{-1}}{2 \pi}\partial
\bar{\partial} \log |s|^2_{h^L}= \frac{\sqrt{-1}}{2 \pi}\partial
\bar{\partial}|f \mid_{U_\iota}|^2 - \frac{\sqrt{-1}}{2
\pi}\partial \bar{\partial}\log(H \circ \theta_\iota^{-1}(x,t)).$$ 
Repeating the
Poincar\'{e}-Lelong formula for every $j \geq 1$ one gets $$ \frac{\sqrt{-1}}{2 \pi}\partial
\bar{\partial}((1/j)\log|s|- h) = [\mbox{Div}(s)] - c_1(L,h^L).$$
It is enough to find a subsequence of the left hand side which tends
to $0$ as a current, so that by Theorem \ref{thm:PL}
$[\mbox{Div}(s)] \to c_1(L,h^L)$ over this subsequence.   We can
assume $t=1$, since the relations between $h$ and $H$ do not
depend on $t$. By Theorem \ref{thm: limsup}, $H(z) =
(\limsup_{j \to \infty}|Q_j|^{1/j})^*(z)$ for some sequence $Q_j$, where each $Q_j$ is a section of $L^j$.
The operator $\partial\bar{\partial}$ is weakly continuous, so it
is enough to prove that $(1/j)\log |Q_j| \to \log H$ over a
subsequence, in the sense of currents. Let $u_j :=(1/j)\log |Q_j|$
be such that $\log H= (\limsup_{j \to \infty}u_j)^*$. Invoking the
argument used in the proof of Theorem 2.6.3 in \cite{Kl}, we also have
$\log H = \lim_{k \to \infty}v_k^*$, where $v_k = \sup_{j \geq k}
u_j$. Let $G$ be a relatively compact domain and let
$\omega(x)=w(x)dz_1\wedge d\bar{z}_1\wedge...\wedge dz_n\wedge
d\bar{z}_n$ be an $(n,n)$-form with $w$ smooth and $\mbox{supp }w
\subset G$. For the sequence of $(0,0)$-currents associated with
the functions $v_k^*$ we have $|v_k(\omega)| \leq \sup_G|w| \int
\log H(z) dz_1\wedge d\bar{z}_1\wedge...\wedge dz_n\wedge
d\bar{z}_n =C_G \|\omega\|_G$. Hence there is a weak*-convergent
subsequence of currents, which we also denote by $v_k^*$. By the
Lebesgue dominated convergence theorem, $v_k^* \to \log H$ in the
sense of currents, which completes the proof.
\end{proof}


\begin{thebibliography}{9999}

\bibitem[Ba]{Ba} V. Batyrev, Variations of the mixed Hodge structure of affine hypersurfaces in algebraic tori, \emph{Duke Math. J.} 69(1993), 349-409

\bibitem[Be]{Be} R. J. Berman: Bergman kernels for weighted polynomials and weighted equilibrium measures on $\mathbb{C}^n$, emph{Indiana Univ. Math. J. }, 58 (2009), no. 4, 1921–1946.

\bibitem[BS]{BS} M. Branker, M. Stawiska: Weighted pluripotential
theory on compact K\"{a}hler manifolds,  \emph{Ann. Polon. Math.} 95 (2009), 163-177

\bibitem[Co]{Co} D. A. Cox, Recent developments in toric geometry, in:  Algebraic geometry-Santa Cruz 1995,  389--436,\emph{ Proc. Sympos. Pure Math.}, 62, Part 2, Amer. Math. Soc., Providence, RI, 1997

\bibitem[CLS]{CLS} D. A. Cox, J. Little, H. Schenck: \emph{Toric Varieties}. Electronic book preprint, 2008-2010, http://www.cs.amherst.edu/~dac/toric.html

\bibitem[De1]{De1} J.-P. Demailly, \emph{Complex Algebraic and Differential Geometry}. Electronic book, 1997,

\bibitem[De2]{De2} J.-P. Demailly, $L^2$ Vanishing Theorems for
Positive Line Bundles and Adjunction Theory, in:
\emph{Transcendental Methods in Algebraic Geometry, Cetraro,
1994}, ed. F. Catanese, C. Ciliberto, Lecture Notes in Math. 1646,
p. 1-97, Springer 1996


\bibitem[El]{El} E. J. Elizondo: The ring of the global sections of multiples of a line bundle on a toric variety, \emph{Proc. Amer. Math. Soc. }, vol. 125, no. 9 (1997), 2527-2529

\bibitem[GZ]{GZ} V. Guedj, A. Zeriahi: Intrinsic capacities on compact K\"{a}hler manifolds,
\emph{J. Geom. Anal.} 15(2005), no. 4, 607- 639



\bibitem[Kl]{Kl} M. Klimek: \emph{Pluripotential Theory}, London Mathematical Society Monographs. New Series, 6. Oxford Science Publications. The Clarendon Press, Oxford University Press, New York, 1991. xiv+266 pp.

\bibitem[MM]{MM}  X. Ma, G. Marinescu: \emph{Holomorphic Morse Inequalities and Bergman Kernels}, Progress in Mathematics, vol. 254, Birkh\"{a}user Verlag AG, Basel-Boston-Berlin, 2007

\bibitem[Mu]{Mu} M. Musta\c{t}\u{a}: Vanishing theorems on toric varieties, \emph{Tohoku Math. J. } (2) Volume 54, Number 3 (2002), 451-470

\bibitem[Ro]{Ro} S.-s. Roan: Picard groups of hypersurfaces in toric varieties, \emph{Publ. RIMS, Kyoto Univ.}, vol 32(1996), 797-834


\bibitem[Si]{Si} J. Siciak: Extremal plurisubharmonic functions and capacities in $\mathbb{C}^n$, \emph{Sophia Kokyuroku in Mathematics}, No. 14, Sophia University, Tokyo, 1982

\end{thebibliography}
\end{document}